 \newtheorem{theorem}{Theorem}[section]
 \newtheorem{lemma}[theorem]{Lemma}
 \newtheorem{definition}[theorem]{Definition}
\begin{document}

\title{Continuous-time Marginal Pricing of Power Trajectories in Power Systems}

\author{Anna Scaglione,~\IEEEmembership{Fellow,~IEEE} 
\thanks{A. Scaglione is with the School of Electrical, Computer and Energy Engineering, Arizona State University, Tempe, AZ 85287 (e-mail: anna.scaglione@asu.edu)}
}


\newcommand{\force}{\mbox{$\Vdash$}}

\newcommand{\ghat}{\mbox{$\bm \hat g$}}

\newcommand{\bzero}{\mbox{${\bm 0}$}}

\newcommand{\va}{\mbox{${\mathbf a}$}}
\newcommand{\vah}{\mbox{${\mathbf \hat a}$}}
\newcommand{\vat}{\mbox{${\mathbf \tilde a}$}}
\newcommand{\vb}{\mbox{${\mathbf b}$}}
\newcommand{\vd}{\mbox{${\mathbf d}$}}
\newcommand{\rh}{\mbox{${\hat r}$}}
\newcommand{\Itl}{\mbox{${\tilde I}$}}

\newcommand{\vxt}{\mbox{${\mathbf \tilde x}$}}
\newcommand{\vh}{\mbox{${\mathbf h}$}}
\newcommand{\vhh}{\mbox{${\mathbf \hat h}$}}
\newcommand{\ve}{\mbox{${\mathbf e}$}}
\newcommand{\vg}{\mbox{${\mathbf g}$}}
\newcommand{\vgh}{\mbox{${\mathbf \hat g}$}}
\newcommand{\vp}{\mbox{${\mathbf p}$}}
\newcommand{\vph}{\mbox{${\mathbf \hat p}$}}
\newcommand{\vq}{\mbox{${\mathbf q}$}}
\newcommand{\vt}{\mbox{${\mathbf t}$}}
\newcommand{\vw}{\mbox{${\mathbf w}$}}
\newcommand{\vwh}{\mbox{${\mathbf \hat w}$}}
\newcommand{\wh}{\mbox{${\hat w}$}}
\newcommand{\vwt}{\mbox{${\mathbf \tilde w}$}}
\newcommand{\wt}{\mbox{${\tilde w}$}}
\newcommand{\vs}{\mbox{${\mathbf s}$}}
\newcommand{\vsh}{\mbox{${\mathbf \hat s}$}}
\newcommand{\vst}{\mbox{${\mathbf \tilde s}$}}
\newcommand{\vr}{\mbox{${\mathbf r}$}}
\newcommand{\vx}{\mbox{${\mathbf x}$}}
\newcommand{\vv}{\mbox{${\mathbf v}$}}
\newcommand{\vu}{\mbox{${\mathbf u}$}}
\newcommand{\vy}{\mbox{${\mathbf y}$}}
\newcommand{\vz}{\mbox{${\mathbf z}$}}
\newcommand{\vn}{\mbox{${\mathbf n}$}}
\newcommand{\vnt}{\mbox{${\mathbf \tilde n}$}}
\newcommand{\vzero}{\mbox{${\mathbf 0}$}}
\newcommand{\vone}{\mbox{${\mathbf 1}$}}

\newcommand{\mA}{\mbox{{$\mathbf A$}}}
\newcommand{\mAh}{\mbox{${\mathbf \hat A}$}}
\newcommand{\mAt}{\mbox{$\mathbf \tilde A$}}
\newcommand{\mB}{\mbox{${\mathbf B}$}}
\newcommand{\mBh}{\mbox{${\mathbf \hat B}$}}
\newcommand{\mC}{\mbox{{$\mathbf C$}}}
\newcommand{\mCh}{\mbox{${\mathbf \hat C}$}}
\newcommand{\mD}{\mbox{{$\mathbf D$}}}
\newcommand{\mDt}{\mbox{$\mathbf \tilde D$}}
\newcommand{\mE}{\mbox{{$\mathbf E$}}}
\newcommand{\mG}{\mbox{{$\mathbf G$}}}
\newcommand{\mF}{\mbox{{$\mathbf F$}}}
\newcommand{\mH}{\mbox{{$\mathbf H$}}}
\newcommand{\mHb}{\mbox{${\mathbf \bar H}$}}
\newcommand{\mI}{\mbox{{$\mathbf I$}}}
\newcommand{\mIh}{\mbox{${\mathbf \hat I}$}}
\newcommand{\mN}{\mbox{{$\mathbf N$}}}
\newcommand{\mM}{\mbox{{$\mathbf M$}}}
\newcommand{\mMh}{\mbox{{$\mathbf \hat M$}}}
\newcommand{\mP}{\mbox{${\mathbf P}$}}
\newcommand{\mQ}{\mbox{${\mathbf Q}$}}
\newcommand{\mR}{\mbox{${\mathbf R}$}}
\newcommand{\mRh}{\mbox{${\mathbf {\hat {R}}}$}}
\newcommand{\mRt}{\mbox{${\mathbf \tilde R}$}}
\newcommand{\mS}{\mbox{${\mathbf S}$}}
\newcommand{\mSb}{\mbox{${\mathbf \bar S}$}}
\newcommand{\mSh}{\mbox{${\mathbf \hat S}$}}
\newcommand{\mSt}{\mbox{${\mathbf \tilde S}$}}
\newcommand{\mT}{\mbox{${\mathbf T}$}}
\newcommand{\mU}{\mbox{${\mathbf U}$}}
\newcommand{\mUh}{\mbox{${\mathbf \hat U}$}}
\newcommand{\mV}{\mbox{${\mathbf V}$}}
\newcommand{\mVh}{\mbox{${\mathbf \hat V}$}}
\newcommand{\mW}{\mbox{${\mathbf W}$}}
\newcommand{\mWh}{\mbox{${\mathbf \hat W}$}}
\newcommand{\mWt}{\mbox{${\mathbf \tilde W}$}}
\newcommand{\mX}{\mbox{${\mathbf X}$}}
\newcommand{\mY}{\mbox{${\mathbf Y}$}}
\newcommand{\mZ}{\mbox{${\mathbf Z}$}}


\newcommand{\ga}{\alpha}
\newcommand{\gb}{\beta}
\newcommand{\grg}{\gamma}
\newcommand{\gd}{\delta}
\newcommand{\gre}{\varepsilon}
\newcommand{\gep}{\epsilon}
\newcommand{\gz}{\zeta}
\newcommand{\gzh}{\mbox{$ \hat \zeta$}}
\newcommand{\gh}{\eta}
\newcommand{\gth}{\theta}
\newcommand{\gi}{iota}
\newcommand{\gk}{\kappa}
\newcommand{\gl}{\lambda}
\newcommand{\gm}{\mu}
\newcommand{\gn}{\nu}
\newcommand{\gx}{\xi}
\newcommand{\gp}{\pi}
\newcommand{\gph}{\phi}
\newcommand{\gr}{\rho}
\newcommand{\gs}{\sigma}
\newcommand{\gsh}{\hat \sigma}
\newcommand{\gt}{\tau}
\newcommand{\gu}{\upsilon}
\newcommand{\gf}{\varphi}
\newcommand{\gc}{\chi}
\newcommand{\go}{\omega}


\newcommand{\gG}{\Gamma}
\newcommand{\gD}{\Delta}
\newcommand{\gTh}{\Theta}
\newcommand{\gL}{\Lambda}
\newcommand{\gX}{\Xi}
\newcommand{\gP}{\Pi}
\newcommand{\gS}{\Sigma}
\newcommand{\gU}{\Upsilon}
\newcommand{\gF}{\Phi}
\newcommand{\gO}{\Omega}


\def\bm#1{\mbox{\boldmath $#1$}}
\newcommand{\vga}{\mbox{$\bm \alpha$}}
\newcommand{\vgb}{\mbox{$\bm \beta$}}
\newcommand{\vgd}{\mbox{$\bm \delta$}}
\newcommand{\vge}{\mbox{$\bm \epsilon$}}
\newcommand{\vgl}{\mbox{$\bm \lambda$}}
\newcommand{\vgm}{\mbox{$\bm \mu$}}
\newcommand{\vgr}{\mbox{$\bm \rho$}}
\newcommand{\vgn}{\mbox{$\bm \nu$}}
\newcommand{\vgp}{\mbox{$\bm \pi$}}
\newcommand{\vgrh}{\mbox{$\bm \hat \rho$}}
\newcommand{\vgrt}{\mbox{$\bm {\tilde \rho}$}}

\newcommand{\vgt}{\mbox{$\bm \gt$}}
\newcommand{\vgth}{\mbox{$\bm {\hat \tau}$}}
\newcommand{\vgtt}{\mbox{$\bm {\tilde \tau}$}}
\newcommand{\vpsi}{\mbox{$\bm \psi$}}
\newcommand{\vphi}{\mbox{$\bm \phi$}}
\newcommand{\vxi}{\mbox{$\bm \xi$}}
\newcommand{\vth}{\mbox{$\bm \theta$}}
\newcommand{\vthh}{\mbox{$\bm {\hat \theta}$}}

\newcommand{\mgG}{\mbox{$\bm \Gamma$}}
\newcommand{\mgGh}{\mbox{$\hat {\bm \Gamma}$}}
\newcommand{\mgD}{\mbox{$\bm \Delta$}}
\newcommand{\mgU}{\mbox{$\bm \Upsilon$}}
\newcommand{\mgL}{\mbox{$\bm \Lambda$}}
\newcommand{\mPsi}{\mbox{$\bm \Psi$}}
\newcommand{\mgX}{\mbox{$\bm \Xi$}}
\newcommand{\mgS}{\mbox{$\bm \Sigma$}}

\newcommand{\oA}{{\open A}}
\newcommand{\oC}{{\open C}}
\newcommand{\oF}{{\open F}}
\newcommand{\oN}{{\open N}}
\newcommand{\oP}{{\open P}}
\newcommand{\oQ}{{\open Q}}
\newcommand{\oR}{{\open R}}
\newcommand{\oZ}{{\open Z}}


\newcommand{\Nu}{{\cal V}}
\newcommand{\cA}{{\cal A}}
\newcommand{\cB}{{\cal B}}
\newcommand{\cC}{{\cal C}}
\newcommand{\cD}{{\cal D}}
\newcommand{\cF}{{\cal F}}
\newcommand{\cH}{{\cal H}}
\newcommand{\cK}{{\cal K}}
\newcommand{\cI}{{\cal I}}
\newcommand{\cL}{{\cal L}}
\newcommand{\cM}{{\cal M}}
\newcommand{\cN}{{\cal N}}
\newcommand{\cO}{{\cal O}}
\newcommand{\cP}{{\cal P}}
\newcommand{\cR}{{\cal R}}
\newcommand{\cS}{{\cal S}}
\newcommand{\cU}{{\cal U}}
\newcommand{\cV}{{\cal V}}
\newcommand{\cT}{{\cal T}}
\newcommand{\cX}{{\cal X}}

\newcommand{\rH}{^{*}}
\newcommand{\rT}{^{ \raisebox{1.2pt}{$\rm \scriptstyle T$}}}
\newcommand{\rF}{_{ \raisebox{-1pt}{$\rm \scriptstyle F$}}}
\newcommand{\rE}{{\rm E}}

\newcommand{\dom}{\hbox{dom}}
\newcommand{\rng}{\hbox{rng}}
\newcommand{\Span}{\hbox{span}}
\newcommand{\Ker}{\hbox{Ker}}
\newcommand{\On}{\hbox{On}}
\newcommand{\otp}{\hbox{otp}}
\newcommand{\ZFC}{\hbox{ZFC}}
\def\Re{\ensuremath{\hbox{Re}}}
\def\Im{\ensuremath{\hbox{Im}}}
\newcommand{\SNR}{\ensuremath{\hbox{SNR}}}
\newcommand{\CRB}{\ensuremath{\hbox{CRB}}}
\newcommand{\diag}{\ensuremath{\hbox{diag}}}
\newcommand{\trace}{\ensuremath{\hbox{tr}}}

\newcommand{\dlot}{\mbox{$\delta^1_3$}}
\newcommand{\Dlot}{\mbox{$\Delta^1_3$}}
\newcommand{\Dlof}{\mbox{$\Delta^1_4$}}
\newcommand{\dlof}{\mbox{$\delta^1_4$}}
\newcommand{\bP}{\mbox{$\mathbf{P}$}}
\newcommand{\Pot}{\mbox{$\Pi^1_2$}}
\newcommand{\Sot}{\mbox{$\Sigma^1_2$}}
\newcommand{\gDot}{\mbox{$\gD^1_2$}}

\newcommand{\Potr}{\mbox{$\Pi^1_3$}}
\newcommand{\Sotr}{\mbox{$\Sigma^1_3$}}
\newcommand{\gDotr}{\mbox{$\gD^1_3$}}

\newcommand{\Pofr}{\mbox{$\Pi^1_4$}}
\newcommand{\Sofr}{\mbox{$\Sigma^1_4$}}
\newcommand{\Dofr}{\mbox{$\gD^1_4$}}

\newcommand{\Sa}{\mbox{$S_{\ga}$}}
\newcommand{\Qk}{\mbox{$Q_{\gk}$}}
\newcommand{\Ca}{\mbox{$C_{\ga}$}}

\newcommand{\gkp}{\mbox{$\gk^+$}}
\newcommand{\aron}{ Aronszajn }

\newcommand{\sqkp}{\mbox{$\Box_{\gk}$}}
\newcommand{\dkp}{\mbox{$\Diamond_{\gk^{+}}$}}
\newcommand{\sqsqnce}
{\mbox{\\ $\ < \Ca \mid \ga < \gkp \ \ \wedge \ \ \lim \ga >$ \ \ }}
\newcommand{\dsqnce}{\mbox{$<S_{\ga} \mid \ga < \gkp >$}}



\newcommand{\beq}{\begin{equation}}
\newcommand{\eeq}{\end{equation}}
\newcommand{\bea}{\begin{array}}
\newcommand{\ena}{\end{array}}
\newcommand{\bds}{\begin {itemize}}
\newcommand{\eds}{\end {itemize}}
\newcommand{\bdf}{\begin{definition}}
\newcommand{\edf}{\end{definition}}
\newcommand{\blm}{\begin{lemma}}
\newcommand{\elm}{\end{lemma}}
\newcommand{\bthm}{\begin{theorem}}
\newcommand{\ethm}{\end{theorem}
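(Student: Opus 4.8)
The plan is to treat the continuous-time dispatch problem as an infinite-dimensional convex program over power trajectories and to identify the marginal price with the multiplier (costate) attached to the instantaneous power-balance constraint. First I would write the Lagrangian $\cL$ in which the balance constraint $\vone\rT\vg(t)=\vone\rT\vd(t)$ at each instant $t\in[0,T]$ is relaxed with a multiplier trajectory $\gl(t)$, while the dynamic ramping constraints $|\dot{\vg}(t)|\le\vr$ together with the capacity bounds $\vg_{\min}\le\vg(t)\le\vg_{\max}$ are relaxed with their own nonnegative multiplier trajectories. Under the standing convexity and constraint-qualification assumptions of the model, a saddle point of $\cL$ exists and strong duality holds, so that the candidate price $\gl(t)$ is exactly the sensitivity of the optimal social welfare to a perturbation of the instantaneous demand --- i.e. the continuous-time analogue of the locational marginal price.

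Next I would extract the pointwise optimality conditions. Because the ramping constraints couple the trajectory across time, plain instantaneous stationarity is not available; instead I would apply the Pontryagin maximum principle (equivalently, the Euler--Lagrange/KKT conditions for the relaxed functional) to obtain a costate equation relating $\gl(t)$ to the marginal generation cost and to the shadow price of the active ramp limits. The key identity to establish is that, at the optimum, $\gl(t)$ equals the marginal cost of the marginal unit \emph{adjusted} by the accumulated shadow cost of the binding ramp constraints; this adjustment is precisely what distinguishes the continuous-time price from the static merit-order marginal cost.

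With the price trajectory in hand, I would verify the three properties that make $\gl(t)$ a valid competitive price: (i) market clearing, which follows from primal feasibility of the balance constraint; (ii) individual profit maximization, i.e. each generator's optimal self-schedule against $\gl(t)$ reproduces its socially optimal trajectory, which follows by decomposing the saddle-point condition into per-generator subproblems and invoking complementary slackness on the capacity and ramp multipliers; and (iii) revenue adequacy, obtained by integrating the complementary-slackness identities over $[0,T]$ and using the transversality conditions at the horizon endpoints.

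The main obstacle I anticipate is \emph{infinite-dimensionality}: guaranteeing that the multipliers exist as genuine functions, rather than only as abstract elements of a dual cone, and carry enough regularity for the pointwise arguments to be meaningful. In particular, the multiplier of the ramp constraint on $\dot{\vg}$ may live only in a space of measures or functions of bounded variation, so the price $\gl(t)$ could exhibit jumps exactly where a ramp limit becomes active; care is needed to phrase the saddle-point and complementary-slackness relations in a form valid almost everywhere and to rule out a duality gap. Establishing a suitable Slater-type interior-point condition for the trajectory constraints in the chosen function space is therefore the crux on which the whole argument rests.
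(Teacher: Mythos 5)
Your plan identifies the right object --- the multiplier $\lambda(t)$ of the instantaneous balance constraint as the sensitivity of the optimal cost to a perturbation of demand --- but it reaches it by a genuinely different and much heavier route than the paper. The paper's proof is a direct envelope-theorem computation: it rewrites the Lagrangian as $\int_{t_1}^{t_2}\bigl(\lambda(t)y(t)+\sum_k g_k(x_k,\dot x_k,t)\bigr)dt$, lifts the load by a constant $\epsilon$ on the open interval (keeping the endpoint values fixed so the Euler--Lagrange boundary conditions survive), observes that only the marginal unit $k^*$ adjusts and that a constant lift leaves $\dot x_{k^*}$ unchanged --- so no $\partial/\partial\dot x$ term appears in the first-order expansion --- and then uses the stationarity condition $\partial C_{k^*}/\partial x_{k^*}-\lambda^*=0$ of the marginal unit to cancel the remaining term, leaving $\lim_{\epsilon\to0}\bigl({\cal L}^*(\epsilon)-{\cal L}^*\bigr)/\epsilon=\int_{t_1}^{t_2}\lambda(t)dt$. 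Your saddle-point/Pontryagin route would, if fully executed, deliver the same conclusion and is in principle more general (it would also cover nonsmooth multipliers and flexible demand), but as written it leaves the central step --- that the multiplier actually equals the derivative of the value functional --- as an appeal to abstract strong duality rather than deriving it, and it never exhibits the mechanism (only the marginal unit moves; its ramp does not) from which the explicit formula \eqref{eq.marginal} drops out. The infinite-dimensional regularity obstacle you flag is real in general, but the paper simply assumes it away: $y$ and $C_k$ are taken continuously differentiable in $t$, so the trajectories and multipliers are assumed continuous and continuously differentiable, which is what lets the paper assert $d\overline{\gamma}^*_k/dt=d\underline{\gamma}^*_k/dt=0$ off the ramp-binding set. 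Finally, the properties (ii) and (iii) you propose to verify (individual profit maximization and revenue adequacy) are not part of the claim being proved; proving them would be extra work orthogonal to the theorem.
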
}
\newcommand{\bprp}{\begin{prop}}
\newcommand{\eprp}{\end{prop}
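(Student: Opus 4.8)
The plan is to treat the continuous-time dispatch as a convex program over a function space — taking the power trajectories $\vp(\cdot)$ to live in $L^2([0,T])$ (or in $L^\infty$ if the ramp constraints force essential boundedness) — and to identify the marginal price trajectory promised by the title with the optimal multiplier attached to the instantaneous power-balance constraint. First I would write the social-planner problem as minimizing the aggregate generation-cost functional subject to the pointwise (possibly nodal) balance $\sum_i p_i(t) = d(t)$ for a.e.\ $t$, together with capacity and ramp-rate constraints, and verify that the cost functional is convex, coercive, and weakly lower semicontinuous and that the feasible set is convex and weakly closed; a minimizer then exists by the direct method of the calculus of variations.

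Next I would form the Lagrangian by pairing the balance constraint with a dual trajectory $\vgl(\cdot)$ in the topological dual of the primal space and establish strong duality. The enabling step is a constraint qualification: under a Slater-type condition — a strictly feasible trajectory meeting demand with slack in every inequality constraint — the infinite-dimensional Karush--Kuhn--Tucker conditions hold and an optimal dual $\vgl^{\star}(\cdot)$ exists. I would then invoke a sensitivity (envelope) argument: perturbing the demand $\vd(\cdot)$ in a feasible direction $\delta\vd(\cdot)$ changes the optimal value, to first order, by $\int_0^T \langle \vgl^{\star}(t),\, \delta\vd(t)\rangle\, dt$, which is exactly the assertion that $\vgl^{\star}(t)$ is the marginal price of power at time $t$. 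Finally I would show these prices decentralize the dispatch: each generator, facing $\vgl^{\star}(\cdot)$ and maximizing its own profit, reproduces its socially optimal schedule, because its individual first-order conditions coincide with the coordinatewise KKT conditions of the planner's problem, and a complementary-slackness argument then yields the revenue-adequacy/cost-recovery statement.

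The main obstacle I expect is the infinite-dimensional duality itself: unlike the discretized finite-horizon case, strong duality and the very existence of a well-behaved multiplier \emph{function} are not automatic. Two hazards stand out. First, a multiplier for an $L^\infty$ constraint a priori lives in the dual of $L^\infty$, which carries singular, merely finitely additive parts; I would need the Slater condition together with the structure of the constraints to rule out a singular component and to represent $\vgl^{\star}$ by a genuine integrable function rather than a mass concentrated on a null set. Second, the ramp-rate constraints couple times and effectively bound the derivative of $\vp(\cdot)$; handling them cleanly will likely require an integration by parts and the introduction of an adjoint (costate) variable, so that the marginal price picks up an additional term capturing the shadow value of ramping. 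Interpreting that extra term economically is, I expect, the substantive payoff of the theorem, and keeping the functional-analytic bookkeeping consistent through it is where the real work lies.
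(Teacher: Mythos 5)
Your plan attacks the problem with far heavier machinery than the paper uses, and it is a genuinely different route. The paper works entirely in the classical calculus-of-variations setting: it assumes the data ($y(t)$, $C_k(x,t)$) are continuous and continuously differentiable, writes the Lagrangian as $\sum_k\int_{t_1}^{t_2} f_k(x_k,\dot x_k,t)\,dt$ with the multipliers appearing pointwise inside $f_k$, and treats the problem as an isoperimetric problem whose stationarity conditions are the Euler--Lagrange equations plus the KKT complementarity conditions. The pricing theorem itself is then proved by a bare-hands perturbation: lift the load by a constant $\epsilon$ on the open interval (keeping $Y(t_1)=y(t_1)$, $Y(t_2)=y(t_2)$ so the Euler--Lagrange boundary conditions survive), observe that only the \emph{marginal unit} $k^*(t)$ --- the unit with no tight capacity or ramp constraint, hence $\overline{\mu}_{k^*}=\underline{\mu}_{k^*}=0$ and $d\overline{\gamma}_{k^*}/dt=d\underline{\gamma}_{k^*}/dt=0$ --- changes its schedule, that $\dot X_{k^*}=\dot x^*_{k^*}$ because the lift is constant, and then compute $\lim_{\epsilon\to 0}({\cal L}^*(\epsilon)-{\cal L}^*)/\epsilon=\int_{t_1}^{t_2}\lambda(t)\,dt$ directly. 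Your duality/envelope framework would, if carried through, deliver a more rigorous version of the same sensitivity statement, and your worry about singular multipliers in the dual of $L^\infty$ is a legitimate issue that the paper simply assumes away by positing continuous, differentiable multiplier trajectories. That is what your approach buys: honesty about existence and regularity of $\lambda^*(\cdot)$. What it costs is concreteness.

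The concrete gap is that your proposal stops at ``$\lambda^*(t)$ is the marginal price'' as an abstract envelope statement and never produces the formula that is the actual content of the result: $\lambda^*(t)=\partial C_{k^*}(x^*_{k^*},t)/\partial x_{k^*}$ for the marginal unit, with the correction terms $-d\overline{\gamma}^*_k/dt+d\underline{\gamma}^*_k/dt$ appearing precisely when a ramp constraint binds. Getting there requires the step you do not mention: identifying, at each $t$, a unit whose inequality constraints are all slack, and arguing from continuity and nonnegativity of $\overline{\gamma}_k,\underline{\gamma}_k$ that their time derivatives vanish where the ramp constraint is inactive, so that the stationarity condition collapses to marginal cost equals price. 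Without that, the ``adjoint/costate term capturing the shadow value of ramping'' you anticipate remains unidentified, and the perturbation direction must also be chosen so as not to activate the ramp constraint of the marginal unit (the paper notes that a time-varying perturbation $\epsilon\eta(t)$ that hits the ramp limit changes the price). Finally, the decentralization and cost-recovery claims at the end of your plan are not part of the statement being proved and would need separate arguments.
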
}
\newcommand{\bcl}{\begin{claim}}
\newcommand{\ecl}{\end{claim}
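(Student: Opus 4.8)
The plan is to establish the claimed marginal-pricing property by combining infinite-dimensional convex duality with an envelope (sensitivity) argument, treating the continuous-time dispatch problem as a convex program over trajectories in a suitable function space (I would work in $L^2(0,\cT)$, or in the space of absolutely continuous trajectories once ramping constraints are present). First I would recast the social-planner problem in standard form, isolating the instantaneous power-balance constraint $\sum_i g_i(t)=d(t)$ whose multiplier trajectory is the candidate price $\lambda(t)$, and I would verify a Slater-type constraint qualification so that a Karush--Kuhn--Tucker multiplier exists and strong duality holds (invoking the Lagrange-multiplier theorem for convex programs in Banach spaces, in the Luenberger/Rockafellar tradition).

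Second, I would form the Lagrangian $\cL(\mathbf{g},\lambda)$ and exhibit a saddle point $(\mathbf{g}^\star,\lambda^\star)$. The crucial structural observation is that, once $\lambda^\star(\cdot)$ is fixed, the Lagrangian separates additively across the generating units, since the private constraints (capacity, ramping) do not couple distinct units. Hence the planner's optimum $\mathbf{g}^\star$ is recovered, unit by unit, as the maximizer of each producer's surplus $\int_0^\cT \bigl(\lambda^\star(t)\,g_i(t) - c_i(g_i(t))\bigr)\,dt$ subject to that unit's own feasibility set. This is precisely the competitive-equilibrium/decentralization statement: the price trajectory $\lambda^\star$ supports the social optimum.

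Third, for the marginal interpretation I would perturb the demand trajectory, $d \mapsto d+\epsilon h$, and track the optimal value $V(d)$. Using the saddle-point characterization together with the envelope theorem for parametric convex programs, the directional derivative of $V$ in the direction $h$ equals $\int_0^\cT \lambda^\star(t)\,h(t)\,dt$, so that $\lambda^\star(t)$ is the Gateaux gradient of $V$ with respect to demand at time $t$ --- the continuous-time analogue of the locational marginal price. When ramping dynamics are present I would argue instead through Pontryagin's maximum principle, identifying $\lambda^\star$ with the costate and reading off the stationarity (Hamiltonian-maximization) condition.

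The main obstacle I anticipate is infinite-dimensional: guaranteeing that there is no duality gap and, more delicately, that the optimal multiplier is represented by a genuine integrable price function rather than a singular element of the dual (a measure carrying atoms), which can occur when the active constraints at the optimum are not \emph{regular}. The ramping constraints are what make this subtle, since they couple adjacent time instants and turn the multiplier into a costate governed by an adjoint differential relation; establishing the regularity needed to write the price as a pointwise-defined $\lambda^\star(t)$ --- and to justify interchanging differentiation and integration in the envelope step --- is where the real work lies, and I expect it to rest on the constraint qualification together with convexity and continuity of the cost functionals $c_i$.
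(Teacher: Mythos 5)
Your proposal is correct in substance and would arrive at the same conclusion, but it takes a genuinely different route from the paper. The paper stays entirely within classical calculus of variations: it treats the dispatch as an isoperimetric problem, assumes up front that the load, the costs, the optimal trajectories and all multipliers are continuous and continuously differentiable, derives the Euler--Lagrange stationarity conditions \eqref{eq.KKT1} together with the complementary-slackness system, and then proves the pricing claim by a bare-hands perturbation: the load is lifted by a constant $\epsilon$ with the endpoints pinned (so the Euler--Lagrange equations still apply), only the marginal unit $k^*$ responds, its ramp is unchanged, and the first-order term in $\partial g_{k^*}/\partial x_{k^*}$ vanishes by stationarity, leaving $\lim_{\epsilon\to 0}({\cal L}^*(\epsilon)-{\cal L}^*)/\epsilon=\int_{t_1}^{t_2}\lambda(t)\,dt$. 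Your plan replaces this with Banach-space convex duality (Slater condition, saddle point, no duality gap), unit-wise decentralization, and an envelope/Gateaux-derivative argument, with Pontryagin's principle absorbing the ramping coupling. What your route buys is rigor precisely where the paper is silent: existence of the multiplier, absence of a duality gap, and --- the point you rightly flag as the hard one --- regularity of the multiplier as a genuine integrable function $\lambda^*(t)$ rather than a measure with a singular part, which the paper simply assumes via its smoothness hypotheses and the ad hoc argument around \eqref{eq.gamma} that $d\overline{\gamma}^*_k/dt=d\underline{\gamma}^*_k/dt=0$ off the active set. What the paper's route buys is the explicit closed-form decomposition of the price in \eqref{eq.mag.price} into marginal cost plus capacity and ramping shadow-price terms, and the identification of the marginal unit in \eqref{eq.marginal}, which your outline would still have to extract from the stationarity or Hamiltonian conditions. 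One caution: in your envelope step the perturbation direction must respect the boundary behavior --- the paper explicitly pins $Y(t_1)=y(t_1)$, $Y(t_2)=y(t_2)$ (equivalently $\eta(t_1)=\eta(t_2)=0$) so that the boundary terms from integration by parts vanish; you will need the analogous transversality condition, the counterpart of \eqref{eq.dgamma}, in the Pontryagin formulation, and without it the directional derivative picks up spurious endpoint contributions.
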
}
\newcommand{\bcr}{\begin{coro}}
\newcommand{\ecr}{\end{coro}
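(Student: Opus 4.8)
The plan is to treat the social planner's dispatch of the power trajectories $\vp(\cdot)$ over the horizon $[0,T]$ as an infinite-dimensional convex program over a suitable function space (e.g.\ $L^2([0,T])$ for the instantaneous powers, together with a space rich enough to carry the ramp/rate trajectories), in which the only constraints coupling the agents are the instantaneous supply--demand balance and the network flow limits, while the cost functional separates across generators. First I would form the Lagrangian $\cL$, attaching a multiplier trajectory $\gl(\cdot)$ to the balance constraint, and observe that, thanks to the separability of the objective and of the private box and ramp constraints, the inner minimization of $\cL$ decouples into one self-scheduling problem per generator, driven solely by $\gl(\cdot)$. This decoupling is exactly the economic statement we want: if $\gl^\star(\cdot)$ is an optimal multiplier, then each price-taking generator that faces the instantaneous price $\gl^\star(t)$ maximizes its own profit precisely by producing the socially optimal trajectory, so the pair consisting of the optimal dispatch and $\gl^\star$ is a competitive equilibrium and $\gl^\star$ is the sought continuous-time marginal price.

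Two ingredients make this rigorous. The first is \emph{strong duality}: I would verify a Slater-type constraint qualification --- exhibiting a strictly feasible dispatch that meets the ramp and capacity limits with a uniform margin --- and then invoke an infinite-dimensional Lagrangian duality theorem to conclude that there is no duality gap and that a dual optimizer exists. The second is the \emph{marginal-price (envelope) interpretation}: perturbing the demand trajectory by an admissible direction and differentiating the optimal value, I would show via the envelope theorem that the sensitivity of the minimum cost to the demand equals $\gl^\star$, so the multiplier literally is the marginal cost of serving an extra increment of load at each instant. Where intertemporal ramp constraints are active I would additionally appeal to a Pontryagin/costate characterization, so that the price trajectory absorbs the shadow value of the ramping dynamics and complementary slackness holds along the horizon.

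The main obstacle will be the infinite-dimensionality: a priori the optimal multiplier lives only in the topological dual of the constraint space, which for hard pointwise constraints can contain singular measures rather than genuine price functions. The crux of the proof is therefore to show that $\gl^\star$ is a bona fide (e.g.\ $L^2$ or bounded-variation) trajectory and that complementary slackness with the capacity and ramp constraints holds \emph{pointwise} in $t$, rather than merely in an integrated sense; the rate constraints, which involve the time-derivative of $\vp(\cdot)$, are what threaten this regularity and couple the decoupled per-generator subproblems back together through their costates. Once the multiplier's regularity and pointwise complementarity are established, the competitive-equilibrium and marginal-price claims follow from the KKT conditions by the separation argument sketched above.
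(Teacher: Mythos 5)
The item you were asked to prove is not a mathematical statement at all: it is a fragment of the paper's LaTeX preamble, namely the tail of the definition of the shorthand macro \texttt{\textbackslash bcr} and the head of the definition of \texttt{\textbackslash ecr}. The paper never even declares a \texttt{coro} theorem environment and contains no corollary anywhere in its body, so there is no claim here and no proof of it in the paper against which your attempt can be compared. On that narrow ground your proposal cannot be ``correct'': it proves nothing that the paper asserts under this label.

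Measured instead against the results the paper actually establishes (the pricing theorem of Section~2, and the lemma and two theorems of Section~3), your sketch aims at the same economic conclusion --- that the balance multiplier $\lambda^*(t)$ is the continuous-time marginal price --- but by a genuinely different route. The paper works entirely within classical calculus of variations: it casts the dispatch as an isoperimetric problem, derives the Euler--Lagrange conditions \eqref{eq.KKT1} (and their generalization \eqref{eq.EuLag-gen} when the bid depends on energy), and proves the pricing claim by an explicit perturbation $Y(t)=y(t)+\epsilon$, arguing that only the marginal unit responds and computing $\lim_{\epsilon\to 0}({\cal L}^*(\epsilon)-{\cal L}^*)/\epsilon=\int_{t_1}^{t_2}\lambda(t)\,dt$. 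Your proposal goes instead through infinite-dimensional Lagrangian duality, a Slater condition, and an envelope theorem, and to its credit it names the real technical danger that the paper simply assumes away: with hard pointwise constraints the dual variable a priori lives in the dual of the constraint space and may be a singular measure rather than a price \emph{function}, whereas the paper posits continuity and differentiability of all trajectories and multipliers from the outset. What your sketch lacks, if it is to stand in for any of the paper's actual theorems, is the concrete payoff: the closed-form price expressions \eqref{eq.marginal}, \eqref{price.C(x,dotx)} and \eqref{price.C(x,dotx,ddotx)}, the identification of the marginal unit through complementary slackness, and the ramping shadow-price terms $d\overline{\gamma}^*_k(t)/dt$ and $d\underline{\gamma}^*_k(t)/dt$ --- none of which emerge from the abstract duality framework without carrying out the Euler--Lagrange (or costate) computation that the paper makes its centerpiece.
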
}
\newcommand{\bquest}{\begin{question}}
\newcommand{\equest}{\end{question}}

\newcommand{\rarrow}{{\rightarrow}}
\newcommand{\Rarrow}{{\Rightarrow}}
\newcommand{\larrow}{{\larrow}}
\newcommand{\Larrow}{{\Leftarrow}}
\newcommand{\restrict}{{\upharpoonright}}
\newcommand{\nin}{{\not \in}}



\newcommand{\ie}{\hbox{i.e.}}
\newcommand{\eg}{\hbox{e.g.}}

\maketitle

\begin{abstract}
In this paper we formulate of the Economic Dispatch (ED) problem in Power Systems in continuous time and include in it ramping constraints to derive an expression of the price that reflects some important inter-temporal constraints of the power units. 
The motivation for looking at this problem is the scarcity of ramping resources and their increasing importance motivated by the variability of power resources, particularly due to the addition of solar power, which exacerbates the need of fast ramping units in the early morning and early evening hours. We show that the solution for the marginal price can be found through Euler-Lagrange equations  and we argue that this price signal better reflects the market value of power in the presence of significant ramps in net-load. 
\end{abstract}

\begin{IEEEkeywords}
Optimal Power Flow.
\end{IEEEkeywords}

\IEEEpeerreviewmaketitle

\section{Introduction}

In today's  electricity markets, energy and ancillary services serve two different purposes. A particularly important stage is the day-ahead market in which generating units only define an hourly cost for their average energy generation schedule during that unit of time, i.e. a function $C_k(\bar x,h)$, where the hourly energy signal in an horizon ${\cal T}=[t_1,t_2]$, with $t_2=,t_1+HT$ is: 
$${\bar x}_k[h]=\int_{t_1+hT}^{(h+1)T} x_k(t)dt, ~~~h=0,\ldots,H-1$$
First, the best portfolio of units is decided by solving the mixed integer linear program that corresponds to the Unit Commitment (UC).  The price is then chosen based on the portfolio of units selected, setting the optimum schedule and is  interpreted as an energy price for that particular period and market settlement.
 
Demand of electricity does not come in chucks of energy and what is demanded from the market is a continuous time power trajectory. Today's operations catch up in real time in shorter term market settlements and engaging ancillary services reserves. 
While it is reasonable for the generation schedules to be merely in the right ballpark and adjust to the actual demand in real time it is important to consider that, using this approximation, one may be throwing away information that is at hand about the inter-hour behavior and, particularly, load ramps that may be difficult to meet with the units committed with the standard UC formulation. 
Recently this issue has come to the forefront due to increased variability of the net-demand, particularly at early and late day hours, that are a direct consequence of the increased penetration of solar power. 
This phenomenon is often illustrated by the popular {\it duck chart} in Fig. \ref{fig.duckchart}.
\begin{figure}[htbp]
\begin{center}
\includegraphics[width=7cm]{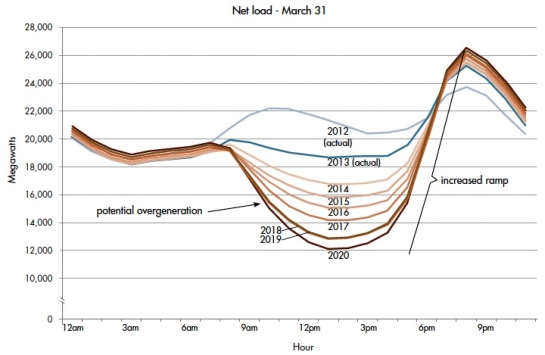}
\caption{The forecast of average California daily load.}
\label{fig.duckchart}
\end{center}
\end{figure}

Metrics for measuring the largest variation range of uncertainty that the system can accommodate were proposed in \cite{ma2009}, while the authors in \cite{lefton2006cost,kumar2012power} proposed methods to quantify the additional cost of generating units incur due to ramping. 
At the same time, there have been several proposal to address the issue of ramping shortage which promote better coordination between energy markets, security constrained Unit Commitment and ancillary service markets and higher incentives for ramping services. Oftentimes the framework proposed is tied to the uncertainty of renewables and the Security Constrained UC (see e.g. \cite{wang2008security,wu2015thermal,bouffard2008stochastic,thatte2014robust}). Cases in which the authors more explicitly tied the ramping cost to modeling the ramping needs in the UC are, for instance, \cite{troy2012unit,wu2013hourly,wang2013flexiramp}. On the market side Midcontinent ISO (MISO) \cite{navid2011,navid2012} and the California ISO (CAISO)  \cite{xu2012flexible,abdul2012enhanced} introduced flexible ramping products in their portfolio of resources.

Accounting for the predictable trends and incentivizing the market properly could lead to decisions that reduce the volume of power that is scheduled in real time.  This is, in a nutshell the motivation behind our previous work in \cite{parvania2015UC}, \cite{parvaniaHICSS2016} \cite{KariPSCC2016} that essentially examined how to convert the intractable variational problem of committing generators trajectories, i.e. solving the Unit Commitment (UC) problem, in continuous time, into a tractable problem, resorting to an expansion of the demand and generation trajectories in a signal space spanned by polynomial splines. 
We brought the notion of continuous time UC a step further in \cite{parvaniaHICSS2016} where, 
assuming we are going to deal with continuous and differentiable trajectories, we defined a unit cost that is a function of both ${x}_k(t)$ and $\dot{x}_k(t)$ $k=1,\ldots,K$, to allow generators to bid also on their ramping capacity, and stimulate market competition in offering ramping services. More recently we proposed to fully generalize export bids 
to possibly account for storage capacity available \cite{KariPSCC2016}.

The subject of this paper is describing simple instances of the variational problem that defines the price of electricity without involving the integer variables of the UC, as is customary in computing the price of energy. We focus on the economic dispatch problem with the goal of calculating the  trajectory of the marginal price per unit of power and time, considering a market that maximizes social surplus in the presence of inflexible demand. 

The paper is organized as follows.
We start in the following section with the classic economic dispatch problem, but look the the optimization over the entire time horizon and include in the calculation ramping constraints.   

\section{The Continuous Time Economic Dispatch Problem and Pricing of Power}
We consider energy market setting in which a set of $K$ generating units compete to sell continuous-time generation trajectory $x_k(t)$ for $k=1,\ldots,K$. The input to the problem is the forecasted load profile $y(t)$ over the day-ahead operating horizon ${\mathcal T}=[t_1,t_2]$ at minimum cost, subject to the constraints of the generating units that include their capacity and ramping limits. 
Our interest in this paper is the derivation of the marginal price of electricity by first focusing on the economic dispatch formulation that relaxes the power flow equations and the line flow constraints. We assume that the UC problem has been solved, for instance, following the setup we proposed in \cite{parvania2015UC}. Hence, the $K$ units considered here were the one that were committed by solving the UC problem. 
 

We can assume that generating unit $k$ expresses a cost function $C_k(x;t)$ such that the incremental cost for generating the energy $x(t)dt$ is $C_k(x;t)dt$. This is perfectly compatible with the current practice of giving an hourly or sub-hourly energy cost, since 
in that case $C_k(x;t)$ is linear (or piece-wise) linear with respect to $x$, over the horizon ${\mathcal T}$, with exactly the same rate. 
The formulation to calculate the market clearing price in continuous time is: 
\begin{align} \label{Var1}
\min_{x_1(t),..,x_K(t)}& ~~~ \sum_{k=1}^K \int_{t_1}^{t_2} \!\!C_k(x_k,t)dt \\ \label{Var2}
\mbox{s.t.}~~&\sum_{k=1}^K x_k(t)=y(t)  ~~ (\lambda(t)) ~~ \forall t \in {\mathcal T} \\ \nonumber
&~~ \forall t \in {\mathcal T},~~k=1,\ldots,K \\ \label{Var3}
&x_k(t) -\overline{x}\leq 0 ~~ (\overline{\mu}_k(t))\\\label{Var4}
&\underline{x}_k -x_k(t) \leq 0 ~~ (\underline{\mu}_k(t)) \\ \label{Var5}
&{\dot x}_k(t) -\overline{x}_k' \leq 0 ~~ (\overline{\gamma}_k(t))  \\ \label{Var6}
&\underline{x}_k' -\dot{x}_k(t) \leq 0 ~~ (\underline{\gamma}_k(t)) 
\end{align}
where $\underline{x}_k$, $\overline{x}_k$, $\underline{x}_k' $, $\overline{x}_k' $ are respectively the upper and lower bounds of the $k$th generation and ramping trajectory, and $\lambda(t)$, $\underline{\mu}_k(t)$, $\overline{\mu}_k(t)$, $\underline{\gamma}_k(t)$, $(\overline{\gamma}_k(t)$ are respectively the Lagrange multipliers associated with the constraints \eqref{Var2}-\eqref{Var6}).
The objective in \eqref{Var1} is to minimize the total continuous-time cost of generating units over the scheduling horizon ${\mathcal T}$, subject to the continuous-time load-generation balance constraint \eqref{Var2}, and the continuous-time generation capacity and ramping constraints in \eqref{Var3}-\eqref{Var6}. 
Here, we assume there is no flexibility in $y(t)$ and its trajectory is certain.

The optimization problem in \eqref{Var1}-\eqref{Var6} is a constrained variational problem, and its solution would determine the values of the Lagrange multipliers over the scheduling horizon ${\mathcal T}$, that correspond to the optimal generation trajectories $\mathbf{x}^*(t)$ for the units that are committed. 
The Lagrangian function of the variational problem \eqref{Var1}-\eqref{Var6} is:
 \begin{align}\label{eq.lagrangian}
{\cal L}=  
& \sum_{k=1}^K \int_{t_1}^{t_2}f_k(x_k,{\dot x}_k,t)dt\\
 f_k(x_k,{\dot x}_k,t)&=C_k(x_k(t),t)+\left(\frac{y(t)}{K}-x_k(t)\right)\lambda(t) \nonumber\\
+& \label{def.f}
\overline{\mu}_k(t)(x_k(t) -\overline{x}_k)+
\underline{\mu}_k(t)(\underline{x}_k -x_k(t)) \\
+& 
\overline{\gamma}_k(t)({\dot x}_k(t) -\overline{x}_k') +
\underline{\gamma}_k(t)(\underline{x}_k' -\dot{x}_k(t))\label{def.fk}
\end{align}
In the following, what we refer to as the {\it continuous-time Lagrange multiplier} $\lambda(t)$ function. In fact, each instantaneous balance constraint needs to be weighted by an infinitesimal quantity $\lambda(t)dt$,
just like the cost of supplying $x_k(t)$ over the interval $[t,t+dt)$ grows infinitesimally by $C_k(x_k(t))dt$. 
Assuming $y(t)$ and $C_k(x,t)$ are continuous and continuously differentiable function of time, the trajectories $x_k(t)$ and Lagrange multipliers are going to be also continuous and continuously differentiable. 
The formulation above is a special case of the classical {\it isoperimetric problem} from Physics (see e.g. \cite{variational}). The key result is that the trajectories $x_k(t)$, $k=1,\ldots,K$ that lead to the minimum ${\cal L}^*\leq {\cal L}$ solutions can be found solving the so called Euler-Lagrange differential equations:
\begin{align}\label{eq.KKT1}
& \frac{\partial  f_k(x_k^*,\dot{x}_k^*,t)}{\partial x_k}-\frac{d}{dt}\frac{\partial f(x_k^*,\dot{x}_k^*,t)}{\partial {\dot x}_k}=0,~ k=1,..,K
\end{align}
 Because there are no specific constraints on the values of the trajectories at least at one of the boundaries of ${\mathcal T}$, there is one additional condition to consider:
\begin{align}\label{eq.dgamma}
\frac{\partial f(x^*_k(t_2),{\dot x}^*_k(t_2),t_2)}{\partial {\dot x}_k}&= \frac{d\overline{\gamma}^*_k(t_2)}{dt}-\frac{d\underline{\gamma}^*_k(t_2)}{dt}=0\\
& ~~k=1,\ldots,K\nonumber
\end{align}

In addition to \eqref{eq.KKT1} the complete Karush-Kuhn-Tucker (KKT) conditions on the optimum trajectories include the following equations:
\begin{align} 
&y(t)-\sum_{k=1}^K x^*_k(t)=0  \label{eq.KKT2} \\
&x_k^*(t)-\overline{x}_k\leq 0,~~\underline{x}_k-x^*_k(t)\leq 0,\label{eq.KKT3} \\
&{\dot x}^*_k(t)-\overline{x}_k'\leq 0,~~\underline{x}_k'-{\dot x}^*_k(t)\leq 0, \\
&\overline{\mu}_k(t)( x_k^*(t)-\overline{x}_k)=0,~~ 
\underline{\mu}_k(t)(\underline{x}_k-x_k^*(t))=0, \label{eq.cse1} \\
&\overline{\gamma}_k(t)({\dot x}_k^*(t)-\overline{x}_k')=0, ~~
\underline{\gamma}_k(t)(\underline{x}'_k-{\dot x}^*_k(t))=0, \label{eq.cse2} \\
&\overline{\mu}_k(t),\underline{\mu}_k(t),\overline{\gamma}_k(t), \underline{\gamma}_k(t) \geq 0. \label{eq.df}\\
\forall& t\in {\mathcal T},~~~k=1,\ldots,K\nonumber
\end{align}
All of these equations are decoupled with the exception of the balance constraint.
Based on the definition of $f(x_k,{\dot x}_k,t)$ in \eqref{def.f} we have:
\begin{align}
&\frac{\partial  f(x_k,{\dot x}_k,t)}{\partial x_k}=\frac{\partial  C_k(x_k,t)}{\partial x_k}-\lambda(t)+\overline{\mu}_k(t)-\underline{\mu}_k(t)\label{eq.parx}\\
&\frac{d}{dt}\frac{\partial  f(x_k,{\dot x}_k,t)}{\partial {\dot x}_k}=\frac{d\overline{\gamma}_k(t)}{dt}-\frac{d\underline{\gamma}_k(t)}{dt},\label{eq.pardx}\\~~
&\forall t\in {\mathcal T},~k=1,\ldots,K\nonumber
\end{align}
Therefore, from the \eqref{eq.KKT2} we can conclude a number of things. 
\begin{enumerate}
\item 
First, 
we can derive the following relationship between the Lagrange multiplier function, the marginal cost and the other Lagrange multipliers functions:
\begin{align}\nonumber
&\!\!\!\!\!\!\!\!\lambda^*(t)=\frac{\partial  C_k(x^*_k,t)}{\partial x_k}+
\overline{\mu}^*_k(t)-\underline{\mu}^*_k(t)-\frac{d\overline{\gamma}^*_k(t)}{dt}+\frac{d\underline{\gamma}^*_k(t)}{dt}\\
\forall& t\in {\mathcal T},~~~k=1,\ldots,K \label{eq.mag.price}
\end{align}

\item
For each unit $k$ that is operating strictly within its generation and ramping capacity limits, the complementarity slackness condition \eqref{eq.cse1}-\eqref{eq.cse2} imply that  the multipliers 
$\overline{\mu}_{k^*}(t)=\underline{\mu}_{k^*}(t)=0$ and/or $\overline{\gamma}_{k^*}(t)=\underline{\gamma}_{k^*}(t)=0$.

\item It is important to note that because, due to the same conditions, the eigenvalues $\overline{\gamma}_{k^*}(t),\underline{\gamma}_{k^*}(t)$ are non negative \eqref{eq.df}, then
zero is also the infimum of the function. Because the function is continuous with continuous derivative in time at the infimum
will also have to be a minimum and have zero derivative. This means that when the ramping constraint is not tight, the following is also true:
\begin{equation}\label{eq.gamma}
\frac{d\overline{\gamma}^*_k(t)}{dt}=0,~~\frac{d\underline{\gamma}^*_k(t)}{dt}=0
\end{equation}
and, thus 
\begin{equation}\label{eq.dtpf}
\frac{d}{dt}\frac{\partial  f(x_k,{\dot x}_k,t)}{\partial {\dot x}_k}=0
\end{equation}
\item In our problem, since we assume that the demand is inflexible, to  always have a feasible solution we have to assume that there always exist an extra unit that can be deployed to meet excess demand that arises. 
The so called {\it marginal unit} is the unit $k^*(t)$ for which none of the inequality constraints is tight at time $t$ and so $\overline{\mu}_{k^*}(t)=\underline{\mu}_{k^*}(t)=0$ and/or $\overline{\gamma}_{k^*}(t)=\underline{\gamma}_{k^*}(t)=0$ and also \eqref{eq.dtpf} is valid. 
In this case, assuming that there is always a unit whose constraints are not tight and denoting by $k^*(t)$ the index of the marginal unit at time $t$ 
the solution of \eqref{eq.KKT1} is: 
\begin{align}\label{eq.marginal}
&\lambda^*(t)=\frac{\partial  C_{k}(x^*_{k},t)}{\partial x_k},~~~~k={k^*}(t),~~\forall t\in {\mathcal T}.
\end{align}
Note that, if the increment in demand has an arbitrary temporal shape, it may be true that only the ramping constraint is tight though the problem is still feasible, in which case the price will include either 
$\frac{d\underline{\gamma}^*_k(t)}{dt}$ or $\frac{d\overline{\gamma}^*_k(t)}{dt}$, i.e. there would be a shadow price associated to ramping that increases the price compared to the marginal cost.
\end{enumerate}
It is well known that, absent a load export bid, the marginal price of electricity and the marginal cost of the so called marginal unit are the same, since the social surplus is maximized by having the marginal generator increase its production by one unit if the demand grows by one unit. The same remains true in the continuous time formulation. 
We can prove this formally,  to state the following theorem:
\begin{theorem}
With a continuous time import bid and inflexible demand the surplus maximizing price for an additional unit of load per unit of time is the Lagrange multiplier $\lambda^*(t)$ at the optimum schedule, i.e. \eqref{eq.marginal}.
\end{theorem}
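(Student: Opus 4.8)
The plan is to read the claim as a statement about the \emph{shadow price} of the balance constraint \eqref{Var2} and to establish it by a perturbation (envelope-type) argument for the variational problem. Since demand is inflexible, maximizing social surplus coincides with minimizing total generation cost subject to serving $y(t)$ exactly, so the surplus-maximizing price for an extra unit of load per unit time at a time $t$ is, by definition, the rate at which the minimized total cost rises when the demand at $t$ is raised. I would therefore introduce the optimal-value functional
\[
J^*[y]=\sum_{k=1}^K\int_{t_1}^{t_2}C_k(x_k^*,t)\,dt,
\]
with $x_k^*$ solving \eqref{Var1}-\eqref{Var6} for the given $y$, and aim to show that a demand perturbation $y\to y+\epsilon\eta$ gives $\left.\frac{d}{d\epsilon}J^*[y+\epsilon\eta]\right|_{\epsilon=0}=\int_{t_1}^{t_2}\lambda^*(t)\eta(t)\,dt$, identifying $\lambda^*(t)$ as the marginal cost of load at $t$.

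First I would fix a marginal unit: by item~4 preceding the theorem there is, at each $t$, an index $k^*(t)$ none of whose inequality constraints \eqref{Var3}-\eqref{Var6} is tight, and I would localize $\eta$ so the perturbation is absorbed without activating a new constraint. Writing the perturbed optimizers as $x_k^*(t;\epsilon)$, feasibility of \eqref{Var2} forces $\sum_k\partial x_k^*/\partial\epsilon=\eta$, and differentiating $J^*$ at $\epsilon=0$ gives $\int_{t_1}^{t_2}\sum_k\frac{\partial C_k}{\partial x_k}\frac{\partial x_k^*}{\partial\epsilon}\,dt$. The key step is to substitute for $\partial C_k/\partial x_k$ the marginal-price identity \eqref{eq.mag.price}, itself a rearrangement of the Euler-Lagrange equation \eqref{eq.KKT1}: the $\lambda^*$ contribution combines with $\sum_k\partial x_k^*/\partial\epsilon=\eta$ into $\int\lambda^*\eta\,dt$; the capacity-multiplier terms vanish because differentiating the complementary-slackness relations \eqref{eq.cse1} gives $\overline\mu_k^*\,\partial x_k^*/\partial\epsilon=\underline\mu_k^*\,\partial x_k^*/\partial\epsilon=0$; and the ramp-multiplier terms integrate by parts into expressions that vanish by \eqref{eq.cse2}, up to a boundary term removed by the transversality condition \eqref{eq.dgamma}. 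What survives is precisely $\int_{t_1}^{t_2}\lambda^*(t)\eta(t)\,dt$, and concentrating $\eta$ at a point identifies the per-unit-time price with $\lambda^*(t)$.

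To make the word ``surplus-maximizing'' explicit I would close with the decentralization argument: letting each unit solve $\max_{x_k}\int_{t_1}^{t_2}[\,p(t)x_k(t)-C_k(x_k,t)\,]dt$ subject to its own \eqref{Var3}-\eqref{Var6}, the Euler-Lagrange condition for a unit in the interior of its limits is $p(t)=\partial C_k/\partial x_k$, which for $k=k^*(t)$ is exactly \eqref{eq.marginal}; for units at a binding limit the gap between $p(t)$ and $\partial C_k/\partial x_k$ is absorbed by the shadow prices in \eqref{eq.mag.price}. Hence $p(t)=\lambda^*(t)$ both supports the cost-minimizing (equivalently, surplus-maximizing) schedule as a competitive equilibrium and equals the sensitivity computed above.

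I expect the main obstacle to be the regularity the perturbation argument quietly assumes: that the demand perturbation can be realized feasibly without activating a new constraint, which needs the marginal unit to have strictly slack capacity and ramp room on the support of $\eta$ and justifies localizing $\eta$; and the differentiability of $J^*[y]$ in $\epsilon$ together with the interchange of $d/d\epsilon$ and the time integral, which rest on the continuity and continuous differentiability of $C_k$, $y$, and the optimal trajectories asserted after \eqref{def.fk}. The delicate bookkeeping is the treatment of the boundary term at $t_2$ from the ramp integration by parts, which must be annihilated by \eqref{eq.dgamma} so that no spurious end-of-horizon price appears.
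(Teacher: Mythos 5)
Your proposal is correct and rests on the same underlying idea as the paper's proof --- a first-order perturbation of the demand and identification of $\lambda^*(t)$ as the resulting sensitivity of the optimum --- but the bookkeeping is genuinely different. The paper perturbs the \emph{Lagrangian} by a constant lift $\epsilon$ (with $Y(t_1)=y(t_1)$, $Y(t_2)=y(t_2)$ so the Euler--Lagrange equations continue to hold), and short-circuits the computation by positing that the entire increment is absorbed by the marginal unit with unchanged ramp, $X_{k^*}=x^*_{k^*}+\epsilon$, ${\dot X}_{k^*}={\dot x}^*_{k^*}$; the surviving term $\partial g_{k^*}/\partial x_{k^*}$ then vanishes by that unit's own stationarity, $\partial C_{k^*}/\partial x_{k^*}=\lambda^*$. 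You instead differentiate the optimal-value functional $J^*[y]$ for a general localized $\eta$, never specifying how the increment is distributed among units, and cancel the $\overline{\mu}_k,\underline{\mu}_k$ and $d\overline{\gamma}_k/dt,\,d\underline{\gamma}_k/dt$ contributions by differentiating complementary slackness and integrating by parts against the transversality condition \eqref{eq.dgamma}. Your version buys robustness: it covers the case where several units are simultaneously interior and share the increment, which the paper's ``only the marginal unit moves'' ansatz quietly assumes away, and your closing decentralization step actually addresses the word ``surplus-maximizing,'' which the paper asserts rather than argues. The price is exactly the regularity you flag (local constancy of the active sets, so the differentiated complementary-slackness products vanish), which the paper's explicit ansatz sidesteps. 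Note finally that the paper's remark after its proof, with $Y=y+\epsilon\eta$ and $\eta(t_1)=\eta(t_2)=0$, is precisely your general statement, so the two arguments land in the same place.
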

\begin{proof}
Let $g_k(x_k,{\dot x}_k,t)$ be:
\begin{equation}
g_k(x_k,{\dot x}_k,t)=f_k(x_k,{\dot x}_k,t)-\frac{y(t)}{K},
\end{equation}
so that we can rewrite the Lagrangian as follows:
\begin{equation}
{\cal L}=\int_{t_1}^{t_2} \left(\lambda(t)y(t)+\sum_{k=1}^Kg_k(x_k,{\dot x}_k,t)\right)dt
\end{equation}
Suppose we increase the load demand by a unit of power $\epsilon$ at all times in ${\cal T}$.
That is, the load is:
\begin{align}
Y(t)&=y(t)+\epsilon,~~~t\in (t_1,t_2)\\
Y(t_1)&=y(t_1),~~Y(t_2)=y(t_2). 
\end{align}
The last equations are necessary for the Euler-Lagrange equations to continue to hold. 
To evaluate the price this costs, we want to establish at what rate the Lagrangian changes compared to the value ${\cal L}^*$, as our demand of power increases. Let us refer to the new value of the Lagrangian as ${\cal L}^*(\epsilon)$.
Note that at the extremum only the marginal unit can adjust its production and ramp up to meet the increased load, all the remaining unit schedules for $k\neq k^*$ remain unchanged. 
Furthermore, since the change in load is a constant lift, the time derivative of the optimum schedule for the marginal unit does not change: 
\begin{equation}
X_{k^*}(t)=x_{k^*}^*(t)+\epsilon~\rightarrow~ {\dot X}_{k^*}(t)={\dot x}_{k^*}^*(t).
\end{equation}
These considerations lead to the following:
\begin{align}
{\cal L}^*(\epsilon)-{\cal L}^*=
\int_{t_1}^{t_2} \!\!\!\left(\lambda(t)\epsilon+\frac{\partial g_k(x^*_{k^*},{\dot x}^*_{k^*},t)}{\partial x_{k^*}^*}\epsilon +{\cal O}(\epsilon^2)\right)dt
\end{align}
where the partial derivative with respect to ${\dot x}^*_{k^*}$ is absent since we have no variation in the ramp. 
Considering that for the marginal unit:
\begin{align}
\frac{\partial g_k(x^*_{k^*},{\dot x}^*_{k^*},t)}{\partial x_{k^*}^*}&=
\frac{\partial f_k(x^*_{k^*},{\dot x}^*_{k^*},t)}{\partial x_{k^*}^*}\\&=\frac{\partial C_k(x^*_{k^*},t)}{\partial x_{k^*}^*}-\lambda^*(t)=0,
\end{align}
we can conclude that:
\begin{align}
\lim_{\epsilon \rightarrow 0} 
\frac{{\cal L}^*(\epsilon)-{\cal L}^*}{\epsilon}=
\int_{t_1}^{t_2} \lambda(t)dt
\end{align}
Hence, $\lambda(t)$ represents the market price per unit of power and per unit of time. 
\end{proof}
Note that, had we perturbed the load in a time varying fashion, i.e.:
$$Y(t)=y(t)+\epsilon\eta(t),$$ 
with $\eta(t_1)=\eta(t_2)=0$, but we never exceed the ramping constraint of the marginal unit, 
the result would have been:
\begin{align}
\lim_{\epsilon \rightarrow 0} 
\frac{{\cal L}^*(\epsilon)-{\cal L}^*}{\epsilon}=
\int_{t_1}^{t_2} \lambda(t)\eta(t)dt
\end{align}
which leads to the same conclusion. There could be a feasible $\eta(t)$ that
make the ramping constraint tight and, therefore, in that case the price would rise above 
the marginal cost and include, as we mentioned, either 
$\frac{d\underline{\gamma}^*_k(t)}{dt}$ or $\frac{d\overline{\gamma}^*_k(t)}{dt}$.

\section{Pricing with Generalized Bid Structures}
In an effort to better reflect constraints and costs of the generating units in our previous work we proposed to generalize the bid structure. Specifically, to incentivize suppliers to compete in offering ramping services, in \cite{parvaniaHICSS2016} we proposed to redefine export price bids so that the cost of generation at a certain time was function growing monotonically with respect to both power and the absolute value of the ramp. Later, to incentivize generators to rely on storage capacity in their generation portfolio, in \cite{KariPSCC2016} we generalized the notion further to make the export bid a function of energy, power and ramp. 

%
In this case, the bid may no longer be a simple export bid, but also include a price for purchasing power, as discussed in in \cite{KariPSCC2016}, that would allow the generator to buy at low prices and sell in later period when shortages raise the whole-sale price. Our previous work focused on the UC problem, where the presence of  resources that are more flexible, but constrained in  
generation or storage capacity, can better compete in the market and decrease the social 
cost of meeting highly variable demand. 

Naturally,  the generalized bid impacts the schedule and the price computed through the economic dispatch. Let us start from the first case, where the cost is function of both power and ramp. In this case:
\begin{theorem} 
When the generator costs depend on both power and ramp, i.e. $C_k(x_k,{\dot x}_k,t)$, the marginal price, i. e. the price for lifting uniformly during the horizon the power demand by one unit, is:
\begin{equation}
\lambda^*(t)=\frac{\partial  C_k(x^*_k,{\dot x}^*_k,t)}{\partial x_k}-\frac{d}{dt}\frac{\partial  C_k(x^*_k,{\dot x}^*_k,t)}{\partial {\dot x}_k},~~k=k^*.
\label{price.C(x,dotx)}
\end{equation}
\end{theorem}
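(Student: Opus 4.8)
The plan is to mirror the two-part structure of the proof of Theorem 1, now with the ramp-dependent cost $C_k(x_k,\dot{x}_k,t)$. First I would redo the Euler--Lagrange stationarity analysis to identify the value of the multiplier $\lambda^*(t)$, and then I would confirm through a perturbation of the load that this $\lambda^*(t)$ is the surplus-maximizing marginal price. I would keep the Lagrangian \eqref{eq.lagrangian}--\eqref{def.fk} intact, changing only $C_k(x_k,t)$ into $C_k(x_k,\dot{x}_k,t)$ inside the integrand $f_k$.

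For the first part I would recompute the two pieces of the Euler--Lagrange operator \eqref{eq.KKT1}. The derivative $\partial f_k/\partial x_k$ keeps the form of \eqref{eq.parx}, namely $\partial C_k/\partial x_k-\lambda(t)+\overline{\mu}_k(t)-\underline{\mu}_k(t)$, whereas $\partial f_k/\partial\dot{x}_k$ now picks up the extra term $\partial C_k/\partial\dot{x}_k$ on top of $\overline{\gamma}_k(t)-\underline{\gamma}_k(t)$, so that \eqref{eq.pardx} becomes $\frac{d}{dt}\frac{\partial C_k}{\partial\dot{x}_k}+\frac{d\overline{\gamma}_k}{dt}-\frac{d\underline{\gamma}_k}{dt}$. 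Substituting into \eqref{eq.KKT1} and solving for $\lambda^*(t)$ gives the analogue of \eqref{eq.mag.price} carrying the additional summand $-\frac{d}{dt}\frac{\partial C_k}{\partial\dot{x}_k}$. Specializing to the marginal unit $k=k^*$, complementary slackness \eqref{eq.cse1}--\eqref{eq.cse2} sets $\overline{\mu}_{k^*}=\underline{\mu}_{k^*}=0$, and the continuity-plus-nonnegativity argument used to obtain \eqref{eq.gamma} (the multipliers reach their infimum zero with vanishing derivative) removes $\frac{d\overline{\gamma}_{k^*}}{dt}$ and $\frac{d\underline{\gamma}_{k^*}}{dt}$. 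What remains is exactly \eqref{price.C(x,dotx)}.

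For the second part I would repeat the load perturbation of Theorem 1, lifting the demand to $Y(t)=y(t)+\epsilon$ in the interior with $Y(t_1)=y(t_1)$ and $Y(t_2)=y(t_2)$, so the marginal unit absorbs the change through $X_{k^*}=x^*_{k^*}+\epsilon$ with unchanged ramp $\dot{X}_{k^*}=\dot{x}^*_{k^*}$. Since the ramp does not vary, the $\partial g_{k^*}/\partial\dot{x}_{k^*}$ contribution again drops out of the first-order expansion of ${\cal L}^*(\epsilon)-{\cal L}^*$. The genuinely new feature, and the step I expect to be the main obstacle, is that the surviving coefficient $\partial g_{k^*}/\partial x_{k^*}=\partial C_{k^*}/\partial x_{k^*}-\lambda^*(t)$ no longer vanishes: by \eqref{price.C(x,dotx)} it equals $\frac{d}{dt}\frac{\partial C_{k^*}}{\partial\dot{x}_{k^*}}$. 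This leaves the extra contribution $\epsilon\int_{t_1}^{t_2}\frac{d}{dt}\frac{\partial C_{k^*}}{\partial\dot{x}_{k^*}}\,dt$, a pure boundary term $\epsilon\left[\frac{\partial C_{k^*}}{\partial\dot{x}_{k^*}}\right]_{t_1}^{t_2}$. I would make this vanish through the natural (free-endpoint) boundary condition \eqref{eq.dgamma}: for the marginal unit $\overline{\gamma}_{k^*}=\underline{\gamma}_{k^*}=0$, so $\partial f_{k^*}/\partial\dot{x}_{k^*}=\partial C_{k^*}/\partial\dot{x}_{k^*}$ is forced to zero at both $t_1$ and $t_2$. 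With the boundary term gone, $\lim_{\epsilon\to0}({\cal L}^*(\epsilon)-{\cal L}^*)/\epsilon=\int_{t_1}^{t_2}\lambda(t)\,dt$, which identifies the $\lambda^*(t)$ of \eqref{price.C(x,dotx)} as the price per unit of power and time. The delicate point is precisely this boundary cancellation, because unlike in Theorem 1 the integrand coefficient is itself nonzero and only its endpoint values are pinned down.
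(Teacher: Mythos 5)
Your proposal is correct, and its first half is, almost verbatim, the entirety of the paper's own proof: the paper simply observes that the constraints (hence \eqref{eq.parx} and the KKT conditions) are unchanged, that $\partial f_k/\partial\dot{x}_k$ acquires the extra term $\partial C_k/\partial\dot{x}_k$ so that \eqref{eq.pardx} becomes \eqref{eq.pardx-gen}, and then specializes to the marginal unit where the $\mu$'s vanish and \eqref{eq.dtpf} holds for the $\gamma$'s, yielding \eqref{price.C(x,dotx)}. (Incidentally, your signs on the $d\overline{\gamma}_k/dt$ and $d\underline{\gamma}_k/dt$ terms are the correct ones; the paper's \eqref{eq.pardx-gen} has them flipped relative to \eqref{def.fk}, which is immaterial only because they vanish for the marginal unit.) Where you genuinely diverge is the second half: the paper stops at the Euler--Lagrange identification of $\lambda^*(t)$ and asserts that ``the claim follows,'' implicitly importing the marginal-price interpretation from Theorem 2.1 without re-deriving it. You instead rerun the load-perturbation argument and correctly isolate the new obstacle: for a ramp-dependent cost the coefficient $\partial C_{k^*}/\partial x_{k^*}-\lambda^*(t)$ no longer vanishes pointwise but equals $\frac{d}{dt}\frac{\partial C_{k^*}}{\partial\dot{x}_{k^*}}$, whose integral is the boundary term $\bigl[\partial C_{k^*}/\partial\dot{x}_{k^*}\bigr]_{t_1}^{t_2}$, and you dispose of it via the natural (free-endpoint) condition in the spirit of \eqref{eq.dgamma}. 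This buys a justification the paper omits for why the Lagrange multiplier still prices a uniform lift of demand when the cost depends on the ramp; the only caveats are that \eqref{eq.dgamma} is stated in the paper only at $t_2$, so you should make explicit that both endpoints are free (or that $\partial C_{k^*}/\partial\dot{x}_{k^*}$ takes equal values at $t_1$ and $t_2$), and that, as in the paper's own Theorem 2.1, the argument tacitly assumes the identity of the marginal unit does not switch over the horizon, since otherwise the integral of $\frac{d}{dt}\frac{\partial C_{k^*(t)}}{\partial\dot{x}_{k^*(t)}}$ picks up interior jump contributions as well.
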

\begin{proof}
The only thing that has changed compared to the previous case is that the objective is a function of both power and ramp, but Euler-Lagrange equations can still be directly applied. The KKT conditions and \eqref{eq.parx} remain the same, since the constraints have not changed, but a new term appears in \eqref{eq.pardx} due to the ramping constraint, which is not part of the traditional economic dispatch problem:
\begin{align}
&\frac{d}{dt}\frac{\partial  f(x_k,{\dot x}_k,t)}{\partial {\dot x}_k}=
\frac{d}{dt}\frac{\partial  C_k(x_k,{\dot x}_k,t)}{\partial {\dot x}_k}-\frac{d\overline{\gamma}_k(t)}{dt}+\frac{d\underline{\gamma}_k(t)}{dt},\label{eq.pardx-gen}\\~~
&\forall t\in {\mathcal T},~k=1,\ldots,K\nonumber.
\end{align}
Therefore, simply applying the Euler-Lagrange equations, and focusing on the marginal unit for which $\overline{\mu}_{k^*}(t)=\underline{\mu}_{k^*}(t)=0$ and/or $\overline{\gamma}_{k^*}(t)=\underline{\gamma}_{k^*}(t)=0$ and also \eqref{eq.dtpf} is valid the claim follows.
\end{proof}

To express the most general bid form, let:
\begin{equation}
z_k(t)=\int_{t_1}^t x_k(t)dt,~{\dot z}_k(t)=x_k(t),~{\ddot z}_k(t)={\dot x}_k(t).
\end{equation}
 The generators cost in this case is a function of the energy power and ramp, i.e.
 $C_k(z_k,{\dot z}_k,{\ddot z}_k,t)$. In this case the {\it isoperimetric problem}  needs to be generalized, due to the presence of the second derivative. 
The generalized formulation is now:
\begin{align} \label{Var1-gen}
\min_{\mathbf{z}(t)}& ~~~ \sum_{k=1}^K \int_{t_1}^{t_2} \!\!C_k(z_k,{\dot z}_k,{\ddot z}_k,t)dt \\ \label{Var2-gen}
\mbox{s.t.}~~&\sum_{k=1}^K z_k(t)=y(t)  ~~ (\lambda(t)) ~~ \forall t \in {\mathcal T} \\ 
&~~ \forall t \in {\mathcal T},~~k=1,\ldots,K\nonumber \\ \label{Var3-gen}
&{\dot z}_k(t) -\overline{x}_k\leq 0 ~~ (\overline{\mu}_k(t))\\\label{Var4-gen}
&\underline{x}_k -{\dot z}_k(t) \leq 0 ~~ (\underline{\mu}_k(t)) \\ \label{Var5-gen}
&{\ddot z}_k(t) -\overline{x}_k' \leq 0 ~~ (\overline{\gamma}_k(t))  \\ \label{Var6-gen}
&\underline{x}_k' -{\ddot z}_k(t) \leq 0 ~~ (\underline{\gamma}_k(t))\\\label{Var7-gen}
&z_k(t) -\overline{z}_k \leq 0 ~~ (\overline{\beta}_k(t))
\end{align}
where \eqref{Var7-gen} 
represent the additional energy constraint\footnote{Note that the energy is always positive as long as the lower bound on power $\underline{x}_k\geq 0$.}. 
Also in this case we can express the Lagrangian as follows:
 \begin{align}\label{eq.lagrangian-gen}
{\cal L}=  
& \sum_{k=1}^K \int_{t_1}^{t_2}\!\!\!f_k(z_k,{\dot z}_k,{\ddot z}_k,t)dt\\
f_k(z_k,{\dot z}_k,{\ddot z}_k,t)&=C_k(z_k,{\dot z}_k,{\ddot z}_k,t)+\left(\frac{y(t)}{K}-{\dot z}_k(t)\right)\lambda(t) \nonumber\\
+& 
\overline{\mu}_k(t)({\dot z}_k(t) -\overline{x})+
\underline{\mu}_k(t)(\underline{x}_k -{\dot z}_k(t)) \nonumber\\
+& 
\overline{\gamma}_k(t)({\ddot z}_k(t) -\overline{x}_k') +
\underline{\gamma}_k(t)(\underline{x}_k' -{\ddot z}_k(t))\nonumber\\
+& 
\overline{\beta}_k(t)(z_k(t) -\overline{z}_k) 
\label{def.f-gen}
\end{align}
Following the same principles that lead to the derivation of the Euler-Lagrange equation, we can prove the following:
\begin{lemma}\label{lem.gen-EuLag}
The trajectories that lead to the minimum ${\cal L}^*$ satisfy the following differential equations:
\begin{align}
& \frac{\partial  f_k(z_k^*,\dot{z}_k^*,\ddot{z}_k^*,t)}{\partial {\dot z}_k}-\frac{d}{dt}\frac{\partial f(z_k^*,\dot{z}_k^*,\ddot{z}_k^*,t)}{\partial {\ddot z}_k}\nonumber\\
&-\int_{t_2}^t\frac{\partial f(z_k^*,\dot{z}_k^*,\ddot{z}_k^*,\tau)}{\partial z_k}d\tau=0,~~~ k=1,...,K, t\in {\cal T}\label{eq.EuLag-gen}
\end{align}
in addition to the following KKT conditions:
 \begin{align} 
&y(t)-\sum_{k=1}^K {\dot z}^*_k(t)=0  \label{eq.KKT2-gen} \\
&{\dot z}_k^*(t)-\overline{x}_k\leq 0,~~\underline{x}_k-{\dot z}^*_k(t)\leq 0,\label{eq.KKT3-gen} \\
&{\ddot z}^*_k(t)-\overline{x}_k'\leq 0,~~\underline{x}_k'-{\ddot z}^*_k(t)\leq 0, \\
&z^*_k(t)-\overline{z}_k\leq 0,~~\underline{z}_k-z^*_k(t)\leq 0, \\
&\overline{\mu}_k(t)( {\dot z}_k^*(t)-\overline{x}_k)=0,~~ 
\underline{\mu}_k(t)(\underline{x}_k-{\dot z}_k^*(t))=0, \label{eq.cse1-gen} \\
&\overline{\gamma}_k(t)({\ddot z}_k^*(t)-\overline{x}_k')=0, ~~
\underline{\gamma}_k(t)(\underline{x}'_k-{\ddot z}^*_k(t))=0, \label{eq.cse2-gen} \\
&\overline{\beta}_k(t)(z_k^*(t)-\overline{z}_k)=0, ~~
\underline{\beta}_k(t)z^*_k(t)=0, \label{eq.cse3-gen} \\
&\overline{\mu}_k(t),\underline{\mu}_k(t),\overline{\gamma}_k(t), \underline{\gamma}_k(t), 
\overline{\beta}_k(t)
\geq 0. \label{eq.df-gen}\\
\forall& t\in {\mathcal T},~~~k=1,\ldots,K\nonumber
\end{align}
\end{lemma}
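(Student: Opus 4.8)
The plan is to treat the power trajectory $w_k(t):=\dot z_k(t)=x_k(t)$ as the primitive variational unknown, and to exploit the fact that the integrand $f_k$ of \eqref{def.f-gen} depends on $w_k$ in three distinct ways: \emph{nonlocally} through the energy $z_k(t)=z_k(t_1)+\int_{t_1}^t w_k(\tau)\,d\tau$, \emph{locally} through the power $\dot z_k=w_k$, and through the ramp $\ddot z_k=\dot w_k$. The nonlocal (integral) dependence on $z_k$ is what takes the problem outside the classical isoperimetric setting behind \eqref{eq.KKT1}, and it is the source of the extra integral term appearing in \eqref{eq.EuLag-gen}.

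First I would form the first variation of ${\cal L}$ under an admissible perturbation $w_k\mapsto w_k+\epsilon\,\delta w_k$, using $\delta z_k(t)=\int_{t_1}^t\delta w_k(\tau)\,d\tau$, $\delta\dot z_k=\delta w_k$ and $\delta\ddot z_k=\tfrac{d}{dt}\delta w_k$, obtaining
\begin{align}
\delta{\cal L}=\sum_{k=1}^K\int_{t_1}^{t_2}\!\!\left[\frac{\partial f_k}{\partial z_k}\!\int_{t_1}^t\!\!\delta w_k\,d\tau+\frac{\partial f_k}{\partial \dot z_k}\,\delta w_k+\frac{\partial f_k}{\partial \ddot z_k}\,\frac{d}{dt}\delta w_k\right]dt.\nonumber
\end{align}
The three terms are then reduced to a common factor $\delta w_k(t)$. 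For the energy term I would interchange the order of integration over the triangle $t_1\le\tau\le t\le t_2$ (Fubini/Dirichlet), which turns its contribution into $\big(\int_t^{t_2}\frac{\partial f_k}{\partial z_k}\,d\tau\big)\delta w_k(t)=-\big(\int_{t_2}^t\frac{\partial f_k}{\partial z_k}\,d\tau\big)\delta w_k(t)$; this is exactly where the integral term of \eqref{eq.EuLag-gen} is born. For the ramp term I would integrate by parts once, producing $-\tfrac{d}{dt}\frac{\partial f_k}{\partial \ddot z_k}$ together with a boundary term $\big[\frac{\partial f_k}{\partial \ddot z_k}\,\delta w_k\big]_{t_1}^{t_2}$ that I would discard using the fixed/natural endpoint conditions (the second-order analogue of the transversality condition \eqref{eq.dgamma}). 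Collecting the coefficient of $\delta w_k(t)$ and invoking the fundamental lemma of the calculus of variations for arbitrary $\delta w_k$ supported in the interior of ${\cal T}$ then yields \eqref{eq.EuLag-gen} pointwise for each $k$ and $t\in{\cal T}$. As a cross-check, \eqref{eq.EuLag-gen} is precisely the classical second-order Euler--Poisson equation $\frac{\partial f_k}{\partial z_k}-\frac{d}{dt}\frac{\partial f_k}{\partial \dot z_k}+\frac{d^2}{dt^2}\frac{\partial f_k}{\partial \ddot z_k}=0$ integrated once in $t$ from $t_2$, with the endpoint terms at $t_2$ removed.

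The accompanying system \eqref{eq.KKT2-gen}--\eqref{eq.df-gen} is then just the remaining first-order optimality data and requires no new work: \eqref{eq.KKT2-gen} is stationarity of ${\cal L}$ in $\lambda(t)$ (primal balance feasibility); the inequalities restate primal feasibility of the capacity, ramp and energy limits; \eqref{eq.cse1-gen}--\eqref{eq.cse3-gen} are the complementary-slackness relations for the multipliers $\overline\mu_k,\underline\mu_k,\overline\gamma_k,\underline\gamma_k,\overline\beta_k$; and \eqref{eq.df-gen} records their sign. These carry over unchanged in form because the new energy constraint \eqref{Var7-gen} enters $f_k$ linearly, exactly as the capacity and ramp constraints do.

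I expect the main obstacle to be bookkeeping rather than anything conceptual: getting the limits and the sign of the Fubini interchange right so that the nonlocal term appears as $-\int_{t_2}^t\frac{\partial f_k}{\partial z_k}\,d\tau$ (and not as an $\int_{t_1}^t$), and arguing cleanly that the boundary contribution from the single integration by parts vanishes under the stated endpoint conditions while the representation $z_k(t)=z_k(t_1)+\int_{t_1}^t w_k$ pins down $z_k(t_1)$. A point worth stating carefully is that $w_k=\dot z_k$ is a legitimate free variation here, so a single application of the fundamental lemma in $\delta w_k$---one integration by parts plus one Fubini swap---suffices, and the double integration by parts of the textbook Euler--Poisson derivation is avoided.
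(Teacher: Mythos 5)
Your proposal is correct and follows essentially the same route as the paper: perturb the power trajectory $\dot z_k$, reduce the energy, power and ramp dependencies of $f_k$ to a common factor of the perturbation (one order-of-integration swap for the nonlocal $z_k$ term, one integration by parts for the $\ddot z_k$ term), and conclude by the fundamental lemma, with the KKT system carried over unchanged. Your Fubini bookkeeping, which anchors the nonlocal term at $t_2$, is in fact marginally cleaner than the paper's integration by parts anchored at $t_1$: it lands directly on the $\int_{t_2}^{t}$ limits appearing in \eqref{eq.EuLag-gen} with no boundary term, rather than producing an $\int_{t_1}^{t}$ antiderivative whose boundary contribution must be killed by the energy-neutrality condition $\eta_k(t_2)=0$.
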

\begin{proof}
To prove \eqref{eq.EuLag-gen}, we shall follow the same steps as the proof of Euler-Lagrange equations. Their derivation is based on the fact that if we perturb the optimum point with an energy neutral trajectory as follows:
\begin{equation}
{\dot Z}_k(t)={\dot z}^*_k(t)+\epsilon_k {\dot \eta}_k(t), ~~k=1,\ldots,K
\end{equation}
the functional gradient of the Lagrangian needs to be zero when $\epsilon_k=0$, since
${\cal L}$ is minimized by $z^*_k(t)$, i.e. ${\cal L}^*\leq {\cal L}$. If we assume that $\eta_k(t_2)={\dot \eta}_k(t_1)={\dot \eta}_k(t_2)=0$ ($\eta_k(t_2)$ implies that the perturbation is energy neutral),  this means that the following integrals, no matter the choice of ${\dot \eta}_k(t)$, have to be zero:
\begin{align}
\left.
\frac{\partial {\cal L}}{\partial \epsilon_k} 
\right|_{\epsilon_k=0}\!\!\!\!\!\!
&=
\int_{t_1}^{t_2}\left(
\frac{\partial  f_k(z_k^*,\dot{z}_k^*,\ddot{z}_k^*,t)}{\partial z_k}\eta_k(t)
\right.\nonumber\\
\!\!\!\!\!\!&
+\left.
\frac{\partial f(z_k^*,\dot{z}_k^*,\ddot{z}_k^*,t)}{\partial {\dot z}_k}{\dot \eta}_k(t)
+\frac{\partial f(z_k^*,\dot{z}_k^*,\ddot{z}_k^*,t)}{\partial {\ddot z}_k}{\ddot \eta}_k(t)
\right)dt\nonumber\\
&=\left.
\left(
\int_{t_1}^t\frac{\partial f(z_k^*,\dot{z}_k^*,\ddot{z}_k^*,\tau)}{\partial {\dot z}_k}d\tau\right)\eta_k(t)
\right]_{t_1}^{t_2}\label{eq.EuLag-gen-L}\\
&+
\left.
\frac{\partial f(z_k^*,\dot{z}_k^*,\ddot{z}_k^*,t)}{\partial {\ddot z}_k}{\dot \eta}_k(t)
\right]_{t_1}^{t_2}
\nonumber\\ 
&+\int_{t_1}^{t_2}\left(
-\int_{t_1}^t\frac{\partial f(z_k^*,\dot{z}_k^*,\ddot{z}_k^*,\tau)}{\partial {\dot z}_k}d\tau
+\frac{\partial f(z_k^*,\dot{z}_k^*,\ddot{z}_k^*,t)}{\partial {\dot z}_k}
 \right.\nonumber\\
&-\left.\frac{d}{dt}\frac{\partial f(z_k^*,\dot{z}_k^*,\ddot{z}_k^*,t)}{\partial {\ddot z}_k}
\right){\dot \eta}_k(t)dt=0~~~k=1,..,K.\nonumber
\end{align}
In \eqref{eq.EuLag-gen-L} only the last integral remains, given that $\eta_k(t_2)={\dot \eta}_k(t_1)={\dot \eta}_k(t_2)=0$ and the integral $\int_{t_1}^t\frac{\partial f(z_k^*,\dot{z}_k^*,\ddot{z}_k^*,\tau)}{\partial {\dot z}_k}d\tau$ is zero at $t=t_1$ because the interval of integration has measure zero. Because  ${\dot \eta}_k(t)$ is entirely arbitrary, the optimum trajectory must satisfy \eqref{eq.EuLag-gen}. 
\end{proof}
Considering the definition of $f(z_k,\dot{z}_k,\ddot{z}_k,t)$ \eqref{def.f-gen}, the generalized Euler-Lagrange equation \eqref{eq.EuLag-gen} obtained in Lemma \ref{lem.gen-EuLag} implies that:
\begin{align}
&-\int_{t_1}^t\frac{\partial C_k(z_k^*,\dot{z}_k^*,\ddot{z}_k^*,\tau)}{\partial {\dot z}_k}d\tau
-\int_{t_1}^t(\overline{\beta}_k(\tau)
d\tau\\
&+\frac{\partial C_k(z_k^*,\dot{z}_k^*,\ddot{z}_k^*,t)}{\partial {\dot z}_k}
-\lambda(t)+\overline{\mu}_k(t)- \underline{\mu}_k(t)
\\
&
-\frac{d}{dt}\frac{\partial C_k(z_k^*,\dot{z}_k^*,\ddot{z}_k^*,t)}{\partial {\ddot z}_k}
-\frac{d\overline{\gamma}_k(t)}{dt}+\frac{d\underline{\gamma}_k(t)}{dt}=0.
\end{align}
To derive the price we need, once again, to focus on the marginal unit, for which the balance constraint is not tight. We also want to restrict ourselves to the assumption that the generators offering the bids which change their cost based on the energy
$C_k(z_k^*,\dot{z}_k^*,\ddot{z}_k^*,t)$ are only suppliers, which make sense since the power is constrained to be positive and if there was an import, it should be negative. This is a subtle point that leads to some simplifications that are not obvious in the general case in which suppliers can switch role and buy power to replenish their capacity. 

Once again, because of the KKT conditions and the fact that the multipliers are continuous for the marginal unit $
\overline{\mu}_k(t)=\underline{\mu}_k(t)=0$ and $d\overline{\gamma}_k(t)/dt=d\underline{\gamma}_k(t)/dt=0$. 
But what also is important to notice is that for the marginal unit also 
\begin{equation}
\int_{t_1}^t\overline{\beta}_k(\tau)d\tau=0
\end{equation}
 as soon as a unit saturates the capacity it can offer, 
i.e. $z_k=\overline{z}_k$, and $\overline{\beta}_k(\tau)>0$ the unit can no longer be the marginal unit as it has no more power to offer. 
Therefore, denoting by $k^*(t)$ the index of the marginal unit at time $t$ in this case we conclude:
\begin{theorem} 
When the generator costs depend on energy, power and ramp, i.e. $C_k(z_k,\dot{z}_k,\ddot{z}_k,t)$, the marginal price of power is:
\begin{align}\label{price.C(x,dotx,ddotx)}
\lambda^*(t)=&
\frac{\partial C_k(z_k^*,\dot{z}_k^*,\ddot{z}_k^*,t)}{\partial {\dot z}_k}-\frac{d}{dt}\frac{\partial C_k(z_k^*,\dot{z}_k^*,\ddot{z}_k^*,t)}{\partial {\ddot z}_k}
\nonumber\\
&
-\int_{t_1}^t\frac{\partial C_k(z_k^*,\dot{z}_k^*,\ddot{z}_k^*,\tau)}{\partial { z}_k}d\tau~~k=k^*(t)
\end{align}
\end{theorem}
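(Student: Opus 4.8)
The plan is to specialize the generalized Euler-Lagrange equation \eqref{eq.EuLag-gen} of Lemma \ref{lem.gen-EuLag} to the explicit integrand \eqref{def.f-gen} and then collapse the multiplier terms on the marginal unit. First I would compute the three partial derivatives of $f_k$ entering \eqref{eq.EuLag-gen}. Since $\dot z_k=x_k$ carries the balance multiplier $\lambda(t)$ and the power-capacity multipliers, one gets $\partial f_k/\partial \dot z_k=\partial C_k/\partial \dot z_k-\lambda(t)+\overline{\mu}_k(t)-\underline{\mu}_k(t)$; since $\ddot z_k$ carries only the ramp multipliers, $\partial f_k/\partial \ddot z_k=\partial C_k/\partial \ddot z_k+\overline{\gamma}_k(t)-\underline{\gamma}_k(t)$; and since $z_k$ carries the energy multiplier, $\partial f_k/\partial z_k=\partial C_k/\partial z_k+\overline{\beta}_k(t)$. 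Substituting these into \eqref{eq.EuLag-gen} yields a single algebraic-integral relation tying $\lambda(t)$ to the three cost sensitivities and to the shadow prices of the capacity, ramp, and energy constraints.

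Second, I would invoke the definition of the marginal unit $k^*(t)$, for which none of the inequality constraints is active at time $t$. By the complementary-slackness conditions \eqref{eq.cse1-gen}--\eqref{eq.cse3-gen}, the nonnegativity \eqref{eq.df-gen}, and the continuity-at-the-infimum argument already established in \eqref{eq.gamma}, the pointwise power and ramp multipliers and the relevant time derivatives vanish on the marginal unit, i.e. $\overline{\mu}_{k^*}(t)=\underline{\mu}_{k^*}(t)=0$ and $d\overline{\gamma}_{k^*}(t)/dt=d\underline{\gamma}_{k^*}(t)/dt=0$. What remains to dispose of is the energy term, which, unlike the others, enters \eqref{eq.EuLag-gen} as the time integral $\int_{t_1}^t\overline{\beta}_{k^*}(\tau)\,d\tau$ rather than as a pointwise value.

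The main obstacle is precisely this integrated energy multiplier, since it cannot be killed by a purely pointwise slackness-plus-continuity argument and instead requires a global-in-time statement over $[t_1,t]$. The argument I would give is that as soon as a supplier saturates its available energy budget, i.e. $z_k(\tau)=\overline{z}_k$ with $\overline{\beta}_k(\tau)>0$, it has exhausted the power it can offer and can no longer be the unit that absorbs an incremental lift of the load; it therefore forfeits the role of marginal unit. Hence on the interval over which $k^*$ is marginal one has $\overline{\beta}_{k^*}(\tau)=0$, so $\int_{t_1}^t\overline{\beta}_{k^*}(\tau)\,d\tau=0$. This step leans essentially on the restriction stated just before the theorem, namely that these generators act purely as suppliers with nonnegative power, which is what lets us discard any lower-energy multiplier and keep the saturation argument one-sided.

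Finally, substituting the vanishing capacity, ramp, and energy contributions into the specialized Euler-Lagrange relation isolates $\lambda^*(t)$ and leaves exactly the three surviving cost sensitivities, reproducing the claimed price \eqref{price.C(x,dotx,ddotx)} evaluated at $k=k^*(t)$. I would note in passing that setting $\partial C_k/\partial z_k\equiv 0$ recovers Theorem~\ref{price.C(x,dotx)} and further setting $\partial C_k/\partial \ddot z_k\equiv 0$ recovers \eqref{eq.marginal}, which is a useful consistency check on the sign and structure of each term.
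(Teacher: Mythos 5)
Your proposal is correct and follows essentially the same route as the paper, which derives the result in the text immediately preceding the theorem: specialize the generalized Euler--Lagrange equation of Lemma \ref{lem.gen-EuLag} to \eqref{def.f-gen}, kill the pointwise power multipliers and the $\gamma$-derivatives on the marginal unit, and dispose of the integrated energy multiplier by arguing that a supplier with saturated energy capacity cannot be marginal, so $\int_{t_1}^{t}\overline{\beta}_{k^*}(\tau)\,d\tau=0$. Your handling of the integrated $\overline{\beta}$ term and the closing consistency checks are, if anything, slightly more explicit than the paper's own wording.
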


A few remarks can be drawn out of this analysis which are summarized in the next section. 

\section{Discussion}

One of the first questions is how generating units would construct their bids. 
This question is hard to answer from a purely technical point of view, as the costs of ramps are mostly associated to the wear and tear of the generating units and, therefore, are not very easy to quantify, while the marginal cost of a certain schedule to offer energy for storage units can be close to zero. Hence, both costs are likely to be {\it opportunity driven costs} rather than clearly quantifiable {\it fuel costs}, as it is for the case of generation of power through fossil fuels. 
 However, the aim of our proposed bid is to 
allow the market participant to evaluate how to best position themselves to profit from the market environment
directly in the commitment phase. While day-ahead costs may rise, especially in the case of ramping services \cite{parvaniaHICSS2016}, the decisions result in lower real-time costs.  
The results we have derived in this paper, namely \eqref{eq.marginal} and \eqref{eq.mag.price} for a regular bid and \eqref{price.C(x,dotx)} and \eqref{price.C(x,dotx,ddotx)} for a generalized bid lead to the following observations
\begin{enumerate}
\item The cost of ramping is naturally apparent in the ED problem when the ramping constraints become tight through the derivative of the instantaneous Lagrange multiplier associated with the ramping constraints, even if one considers the current practice of bidding exclusively as a function of power.
\item Because the marginal price is tied to the marginal instantaneous cost, the price is continuous as long as the cost is continuous. Because net-demand is continuous it is natural to think that if demand lies in a finite function space (this was the hypothesis in \cite{parvania2015UC}) this function space might be adequate to also express the variations in costs. However, the connection between the features of the generators trajectories and the temporal variations in cost is still elusive and requires more investigation. 
\item The negative sign of the contributions that come from the dependence on ramping in \eqref{price.C(x,dotx)} has a nice interpretation: by increasing demand of power now, we can offset increasing costs of ramp that follow. If, on the other hand, the cost in ramp is declining (has negative slope in time) the rise in power is penalized.
\item For the energy cost the situation is different: inevitably as time goes on the energy demand grows which rises the marginal cost of power. Thus, the negative term \eqref{price.C(x,dotx,ddotx)}  can potentially be interpreted as a correction term, which rebalances such growth.  
\end{enumerate}

\section{Conclusions}
In this paper we derived the continuous time price of electricity in continuous time by solving an Economic dispatch problem with two differences compared to the standard formulation: 1) we included ramping constraints and solved the problem over a certain horizon accounting for possible ramping shortages; 2) we included more complex bids that allow for a change in price due to ramping or relative to the amount of energy that the unit has offered up to a certain point in time. 
We have shown that the notion of marginal price can be generalized to the continuous time regime and taken inspiration in solving the variational problem of computing the price from the solution of the so called isoperimentric problem \cite{variational}.

\section{Acknowledgements}
We wish to acknowledge the discussions with Masood Parvania and K\'{a}ri Hreinsson that lead to the question of correctly pricing power and energy trajectories. 

\bibliographystyle{IEEEtran}


\end{document}